\theoremstyle{plain}
\newtheorem{theorem}{Theorem}[section]
\newtheorem{proposition}[theorem]{Proposition}
\newtheorem{lemma}[theorem]{Lemma}
\theoremstyle{definition}
\date{}
\newcommand\sO{{\mathcal O}}
  \def \tab#1{\kern #1 truein}
\begin{document}
\title{Weakly uniform rank two vector bundles on multiprojective spaces}
\author{Edoardo Ballico and Francesco Malaspina
\vspace{6pt}\\
{\small   Universit\`a di Trento}\\
{\small\it 38123 Povo (TN), Italy}\\
{\small\it e-mail: ballico@science.unitn.it}\\
\vspace{6pt}\\
{\small  Politecnico di Torino}\\
{\small\it  Corso Duca degli Abruzzi 24, 10129 Torino, Italy}\\
{\small\it e-mail: francesco.malaspina@polito.it}}
    \maketitle\def\thefootnote{}
\footnote{Mathematics Subject Classification 2010: 14F05, 14J60. \\
keywords:  uniform vector bundles, splitting type, multiprojective spaces}
 \begin{abstract}Here we classify the weakly uniform rank two vector bundles on multiprojective spaces.\\
 Moreover we show that every rank $r>2$ weakly uniform vector bundle with splitting type $a_{1,1}=\dots =a_{r,s}=0$ is trivial and every rank $r>2$ uniform vector bundle with splitting type
 $a_1>\dots >a_r$, splits.
\end{abstract}


 \section{Introduction}
 We denote by $\mathbb {P}^{n}$ the $n$-dimensional projective space aver an algebraic field of characteristic
zero. A rank $r$ vector bundle $E$ on $\mathbb {P}^n$ is said to be {it uniform} if there is a sequence of integers $(a_1,\dots , a_r)$ with $a_1\geq \dots\geq a_r$ such that for every line $L$ on $\mathbb P^n$, $E_{|L}\cong \oplus_{i=1}^{r}\sO(a_i)$. The sequence $(a_1,\dots , a_r)$ is called the splitting type of $E$.\\
The classification of these bundles is known in many cases: rank $E\leq n$ with $n\geq 2$ (see \cite{vdv}, \cite{sa}, \cite{ehs}); rank $E=n+1$ for $n=2$ and $n=3$ (see \cite{ele}, \cite{el2}); rank $E=5$ for $n=3$ (see \cite{be}). Nevertheless there are uniform vector bundles  (of rank $2n$) which are not homogeneous (see \cite{dr}).\\
In \cite{bn} the authors gave the notion of weakly uniform bundle on $\mathbb {P}^1\times \mathbb {P}^1$. 
For the study of rank two weakly uniform vector bundles on $(\mathbb {P}^1)^s$, see \cite{s}, \cite{ns} and \cite{bn}. \\
Here we are interested on vector bundles on multiprojective spaces.  Fix integers $s\ge 2$ and $n_i\ge 1$. Let $X:= \mathbb {P}^{n_1}\times \cdots \times \mathbb {P}^{n_s}$ be a multiprojective
space. Let $$u_i: X \to \mathbb {P}^{n_i}$$ be the projection on the $i$-th factor. For all $1 < i < j$ let $$u_{ij}: X \to  \mathbb {P}^{n_i}\times  \mathbb {P}^{n_j}$$ denote the projection
onto the product of the $i$-th factor and the $j$-th factor.
Set $\mathcal {O}:= \mathcal {O}_X$. For all integers $b_1,\dots ,b_s$ set $\mathcal {O}(b_1,\dots ,b_s):= \otimes _{i=1}^{s} u_i^{\ast}(\mathcal {O}_{\mathbb {P}^{n_i}}(b_i))$. We recall that every line bundle on $X$
is isomorphic to a unique line bundle $\mathcal {O}(b_1,\dots ,b_s)$. Set $X_i:= \prod _{j\ne i} \mathbb {P}^{n_j}$. Let $$\pi
_i: X \to X_i$$ be the projection. Hence $\pi _i^{-1}(P) \cong \mathbb {P}^{n_i}$ for each $P\in X_i$. Let $E$ be a rank $r$
vector bundle on $X$. We say that $E$ is {\it weakly uniform} with splitting type $(a_{h,i})$, $1 \le h \le r$, $1 \le i \le
s$, if for all $i\in \{1,\dots ,s\}$, every $P\in X_i$ and every line $D\subseteq \pi _i^{-1}(P)$ the vector bundle
$E\vert D$ on $D\cong \mathbb {P}^1$ has splitting type $a_{1,i} \ge \cdots \ge a_{r,i}$.
A weakly uniform vector bundle $E$
on $X$ is called {\it uniform} if there is a line bundles $(a_1,\dots ,a_s)$ such that the splitting types of $E(a_1,\dots
,a_s)$ with respect to all $\pi _i$ are the same. In this case a splitting type of $E$ is the splitting type $c_1\ge \cdots \ge c_r$, $r:= \mbox{rank}(E)$,
of $E(a_1,\dots ,a_s)$. Notice that the $r$-ple of integers $(c_1,\dots ,c_r)$ is not uniquely determined by $E$, but
that the $(s-1)$-ple $(c_1-c_2,\dots ,c_{s-1}-c_s)$ depends only from $E$. Indeed, a rank $r$ weakly uniform vector bundle $E$ of splitting type $(a_{h,i})$, $1 \le h \le r$,
$1 \le i \le s$, is uniform if and only if there are $s-1$ integers $d_j$, $2 \le j \le s$, such that $a_{h,i} = a_{h,1}+d_i$ for all $i\in \{2,\dots ,s\}$. If $E$ is uniform, then
the $r$-ples  $(a_{1,1}+y,\dots ,a_{r,1}+y)$, $y\in \mathbb {Z}$, are exactly the splitting types of $E$. If $E$ is uniform it is usually better to consider $E(0,a_{1,2}-a_{1,1},\dots ,a_{1,s}-a_{1,s})$ instead
of $E$, because all the splitting types of $E(0,a_{1,2}-a_{1,1},\dots ,a_{1,s}-a_{1,s})$ as a weakly uniform vector bundle are the same.

In this paper we prove the following result:

\begin{theorem}\label{i1}
Let $E$ be a rank $2$ vector bundle on $X$. $E$ is weakly uniform if and only if there are $L\in \mbox{Pic}(X)$, indices $1 \le i < j \le s$ and a rank $2$ weakly uniform vector bundle $G$ on $\mathbb {P}^{n_i}\times \mathbb {P}^{n_j}$
such that $E \otimes L \cong u_{ij}^\ast (G)$. $E$ splits if either $n_i\ge 3$ or $n_j \ge 3$. If $1\le n_1 \le 2$, $1 \le n_2 \le 2$ and $(n_1,n_2) \ne (1,1)$, then $E$ splits unless there is $h\in
\{1,2\}$ such that $n_h=2$ and
$E\otimes L \cong u_h^{\ast }(T\mathbb {P}^2)$ for some $L\in \mbox{Pic}(X)$.
\end{theorem}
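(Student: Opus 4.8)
The plan is to reduce, by a sequence of twists and relative constructions, to a rank two uniform bundle on a single $\mathbb{P}^{n_k}$-fibre, where the classification of \cite{vdv} applies, and then to glue the fibrewise data over the remaining factors. The implication $\Leftarrow$ is immediate: tensoring by $L\in\mathrm{Pic}(X)$ shifts every splitting type by a constant, and $u_{ij}^{\ast}(G)$ restricts to $G$ along lines in the $i$-th and $j$-th directions and trivially along every other direction, so it is weakly uniform whenever $G$ is. For $\Rightarrow$, I first record that for each $k$ and each $P\in X_k$ the restriction $E\vert\pi_k^{-1}(P)$ is a rank two uniform bundle on $\mathbb{P}^{n_k}$; by \cite{vdv} it either splits, or $n_k=2$ and it is a twist of $T\mathbb{P}^2$ (equivalently of $\Omega^1_{\mathbb{P}^2}$). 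Next, if direction $k$ is balanced ($a_{1,k}=a_{2,k}$), then after a twist its type is $(0,0)$, so $E$ is trivial on every $\pi_k^{-1}(P)$; cohomology and base change then give $\pi_k^{\ast}\pi_{k\ast}E\cong E$, i.e. $E$ is pulled back from $X_k$ and the $k$-th factor is dropped. Iterating over all balanced directions, after a twist I may assume all directions lie in the unbalanced set $I:=\{k:\ a_{1,k}>a_{2,k}\}$.

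Now pick $k\in I$ in which the fibres split. The top piece $\mathcal{O}(a_{1,k})$ has one-dimensional space of sections in each fibre, so relative Harder--Narasimhan over $X_k$ produces a line subbundle $M\hookrightarrow E$ with locally free quotient $N$, yielding
\[ 0\to M\to E\to N\to 0 \]
with $M=\mathcal{O}(\alpha)$, $N=\mathcal{O}(\beta)$ and $\alpha_k-\beta_k=a_{1,k}-a_{2,k}>0$. I then restrict $M$ to the fibres of a second unbalanced direction $k'$. Either $M$ again meets the top piece $\mathcal{O}(a_{1,k'})$ (the \emph{aligned} case), so that $\alpha-\beta$ is strictly positive in both coordinates and, by the Künneth formula, $\mathrm{Ext}^1(N,M)=H^1(X,\mathcal{O}(\alpha-\beta))=0$ since each factor contributes $H^1(\mathbb{P}^{n_k},\mathcal{O}(\text{positive}))=0$; hence the sequence splits and $E$ is decomposable. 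Or $M$ lands in the bottom piece $\mathcal{O}(a_{2,k'})$ (the \emph{crossed} case), so that $(\alpha-\beta)_{k'}<0$ while $(\alpha-\beta)_{k}>0$.

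The crossed case is the heart of the matter. Here the extension class lies in the Künneth summand $H^0(\mathbb{P}^{n_k},\mathcal{O}(c_k))\otimes H^1(\mathbb{P}^{n_{k'}},\mathcal{O}(-c_{k'}))$ with $c_k,c_{k'}>0$. Restricting the sequence to a line $\{Q\}\times\mathbb{P}^{n_{k'}}$ evaluates the $H^0$-factor at $Q$, so the splitting type along such lines jumps along the hypersurface $\{f=0\}\subset\mathbb{P}^{n_k}$ unless $n_k=1$ (when $\mathcal{O}(c_k)$ forces no proper vanishing locus compatible with constancy) or the class is zero. Thus weak uniformity forces the complementary factor to be a $\mathbb{P}^1$; running the same argument from direction $k'$ forces $\mathbb{P}^{n_k}$ to be a $\mathbb{P}^1$ as well. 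Hence a genuinely indecomposable crossed bundle lives on $\mathbb{P}^1\times\mathbb{P}^1$, which is exactly the $(n_i,n_j)=(1,1)$ situation treated in \cite{bn}, \cite{s}, \cite{ns}. In all other configurations the only surviving non-split possibility is a non-split \emph{fibre}: by Van de Ven this requires some $n_h=2$ with $E\vert\pi_h^{-1}(P)\cong T\mathbb{P}^2(t)$, and since $T\mathbb{P}^2$ is stable and rigid ($H^1(\mathbb{P}^2,\mathrm{End}(T\mathbb{P}^2))=0$), the constant fibre forces $E\cong u_h^{\ast}(T\mathbb{P}^2)(t)$, a single-factor bundle. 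This yields the structural statement $E\otimes L\cong u_{ij}^{\ast}(G)$, and simultaneously the two geometric conclusions: if $n_i\ge 3$ or $n_j\ge 3$ the hypersurface $\{f=0\}$ is nonempty and the jumping argument forces $f=0$, so $E$ splits; if both are $\le 2$ with $(n_1,n_2)\ne(1,1)$ the only non-split option is the $T\mathbb{P}^2$-pullback above.

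The main obstacle is the crossed-case analysis of the previous paragraph: turning ``the splitting type is constant along the crossed fibres'' into a precise rank/jumping condition on the Künneth extension class, and proving that this condition can be met non-trivially only when the complementary factor is $\mathbb{P}^1$. Subsidiary care is needed to exclude a third linked unbalanced direction and to handle the mixed case in which some directions have splitting fibres while another carries $T\mathbb{P}^2$, where one must check that the destabilising subbundle from a splitting direction cannot inject into a stable $T\mathbb{P}^2(t)$ fibre with the degree required for crossing.
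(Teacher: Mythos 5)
Your skeleton coincides with the paper's: eliminate the balanced directions by base change, and in the unbalanced directions use the direct image to realise $E$ as an extension of one line bundle by another, to be split by the K\"unneth formula. But the two steps you yourself flag as open are exactly where the proof has to happen, and in both places the (much simpler) idea the paper relies on is missing. For your ``crossed case'' no jumping-locus analysis is needed: $\mathrm{Ext}^1(N,M)=H^1(X,M\otimes N^{\vee})$ decomposes by K\"unneth into summands, each containing exactly one factor $H^1(\mathbb{P}^{n_l},\mathcal{O}(e_l))$ tensored with $H^0$'s of the other factors, and $H^1(\mathbb{P}^{n_l},\mathcal{O}(e_l))$ vanishes for \emph{every} $e_l$ as soon as $n_l\ge 2$. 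Hence a non-split extension forces the direction carrying the $H^1$ --- your crossed direction $k'$ itself, not the complementary factor $\mathbb{P}^{n_k}$ --- to be a $\mathbb{P}^1$ with $e_{k'}\le -2$, and forces every other direction to be aligned (otherwise the $H^0$ factors kill the summand). This one vanishing statement is what splits the extension (\ref{eqa2}) in Lemmas \ref{a1}--\ref{a5} and already yields both ``$E$ splits if $n_i\ge 3$ or $n_j\ge 3$'' and the $\mathbb{P}^2\times\mathbb{P}^2$ case. Your jumping argument, besides being left as ``the main obstacle'', is also miscalibrated where it is specific: a section of $\mathcal{O}(c_k)$ with $c_k>0$ on $\mathbb{P}^1$ does vanish (at $c_k$ points), so the parenthetical reason you give for $n_k=1$ escaping the jumping obstruction is false, and the class need not be a pure tensor $f\otimes\xi$, so the relevant locus is a common zero locus rather than a single hypersurface --- which is precisely why non-split weakly uniform bundles survive on $\mathbb{P}^1\times\mathbb{P}^1$ (\cite{bn}) and why your argument, taken literally, would wrongly exclude them.

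Second, your treatment of the $T\mathbb{P}^2$ case is an assertion, not a proof. ``All fibres of $\pi_h$ are isomorphic to $T\mathbb{P}^2(t)$, which is stable and rigid, hence $E\cong u_h^{\ast}(T\mathbb{P}^2)(t)$'' needs an actual descent argument (for instance, $\pi_{h\ast}(E\otimes u_h^{\ast}(T\mathbb{P}^2(t))^{\vee})$ is a line bundle because $T\mathbb{P}^2$ is simple, and evaluation is a fibrewise isomorphism), and it presupposes that the remaining directions contribute nothing, which is your unresolved ``mixed case''. The paper produces $T\mathbb{P}^2$ from the other side: when some direction is balanced, $E$ descends to the complementary product, and $T\mathbb{P}^2$ appears only as the descended rank-two uniform bundle on a $\mathbb{P}^2$ factor via the classification of \cite{ehs}; when every direction is unbalanced, the extension above splits by the K\"unneth vanishing, which is incompatible with an indecomposable restriction to any fibre, so the mixed case never occurs. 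As it stands, your proposal reproduces the routine half of the argument (restriction to fibres, base change for balanced directions, the aligned case) and postpones the two points that carry the content.
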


Moreover we discuss the case of higher rank. We show that every rank $r>2$ weakly uniform vector bundle with splitting type $a_{1,1}=\dots =a_{r,s}=0$ is trivial and every rank $r>2$ uniform vector bundle with splitting type
 $a_1>\dots >a_r$, splits. Our methods did not allowed us to attack other splitting types.\\

\section{Weakly uniform rank two vector bundles}

In order to prove Theorem \ref{i1} we need a few lemmas.\\
We first consider the case $s=2$.

\begin{lemma}\label{a1}
Assume $s=2$, $n_1 =1$ and $n_2 = 2$. Let $E$ be a rank $2$ vector bundle on $\mathbb {P}^1 \times \mathbb {P}^2$. $E$ is
weakly uniform if and only if either $E$ splits as the direct sum of $2$ line bundles or there is a line bundle $L$ on
$\mathbb {P}^1
\times \mathbb {P}^2$ such that $E \cong L\otimes \pi _2^\ast (T\mathbb {P}^2)$.
\end{lemma}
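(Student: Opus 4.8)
The plan is to prove the two implications separately, the substantial one being that weak uniformity forces $E$ into one of the two listed shapes. Throughout I write $p\colon X\to\mathbb{P}^1$ and $q\colon X\to\mathbb{P}^2$ for the two projections, so that $q$ is the map written $\pi_2$ in the statement (its target being $\mathbb{P}^2$); the fibres of $q$ are themselves lines (copies of $\mathbb{P}^1$), while the weak uniformity in the ``plane directions'' concerns the lines $\ell$ lying inside the plane fibres of $p$. For the easy direction, a direct sum of line bundles is visibly weakly uniform; and if $E\cong L\otimes q^\ast(T\mathbb{P}^2)$, then on a fibre of $q$ the factor $q^\ast(T\mathbb{P}^2)$ is trivial, so $E$ restricts there to $\mathcal O(a)\oplus\mathcal O(a)$, while on a line $\ell\subset$ a plane fibre of $p$ the Euler sequence gives $(T\mathbb{P}^2)|_{q(\ell)}\cong\mathcal O(2)\oplus\mathcal O(1)$, so $E|_\ell\cong\mathcal O(b+2)\oplus\mathcal O(b+1)$. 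Both types are constant, so $E$ is weakly uniform.

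For the converse I assume $E$ weakly uniform and let $(b_1,b_2)$, $b_1\ge b_2$, be the splitting type of $E$ on the line fibres of $q$. Twisting $E$ by $\mathcal O(-b_2,0)$ (which preserves weak uniformity and both target shapes) I may assume this type is $(d,0)$ with $d:=b_1-b_2\ge 0$, and the argument then bifurcates on whether $d=0$.

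If $d>0$, then on each fibre $q^{-1}(x)\cong\mathbb{P}^1$ the bundle $E$ has a unique sub-line-bundle of maximal degree, the $\mathcal O(d)$ summand. I would globalise this by setting $F:=q_\ast\big(E(-d,0)\big)$: since $E(-d,0)$ restricts to $\mathcal O\oplus\mathcal O(-d)$ on every fibre, $h^0$ is constantly $1$, so Grauert base change makes $F$ a line bundle $\mathcal O_{\mathbb{P}^2}(m)$ and the evaluation map realises $A:=\mathcal O(d,m)$ as a sub-line-bundle of $E$ with locally free quotient, giving $0\to\mathcal O(d,m)\to E\to\mathcal O(0,b')\to 0$. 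The extension class lies in $\mathrm{Ext}^1\big(\mathcal O(0,b'),\mathcal O(d,m)\big)\cong H^1\big(X,\mathcal O(d,m-b')\big)$, and by the Künneth formula this vanishes because $H^1(\mathbb{P}^1,\mathcal O(d))=0$ for $d\ge0$ and $H^1(\mathbb{P}^2,\mathcal O(k))=0$ for every $k$. Hence the sequence splits and $E$ is a direct sum of two line bundles.

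If $d=0$, then $E$ is fibrewise trivial on the fibres of $q$, so $F:=q_\ast E$ is locally free of rank $2$ and the evaluation $q^\ast F\to E$, being an isomorphism on every fibre, is an isomorphism; thus $E\cong q^\ast F$. Restricting to a line $\ell$ inside a plane fibre of $p$ identifies $E|_\ell$ with $F$ restricted to the line $q(\ell)\subset\mathbb{P}^2$, so weak uniformity of $E$ in the plane directions says exactly that $F$ is a uniform rank $2$ bundle on $\mathbb{P}^2$. By Van de Ven's classification \cite{vdv}, $F$ is either a sum of line bundles, whence $E=q^\ast F$ splits, or a twist $T\mathbb{P}^2\otimes\mathcal O_{\mathbb{P}^2}(c)$, whence $E\cong\mathcal O(0,c)\otimes q^\ast(T\mathbb{P}^2)=L\otimes q^\ast(T\mathbb{P}^2)$; undoing the initial twist preserves both conclusions. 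The main obstacle is precisely this $d=0$ step: producing the descent $E\cong q^\ast F$ via base change and then correctly transporting the plane-direction weak uniformity of $E$ into the classical uniformity hypothesis on $\mathbb{P}^2$, so that Van de Ven's theorem can be invoked.
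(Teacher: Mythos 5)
Your proof is correct and follows essentially the same route as the paper: normalize by a twist, descend along the projection to $\mathbb{P}^2$ via base change when the splitting on the $\mathbb{P}^1$-fibres is balanced and invoke the classification of uniform rank two bundles on $\mathbb{P}^2$, and otherwise extract the maximal line subbundle by pushforward and kill the extension class with K\"unneth. The only difference is cosmetic: you drive the entire case division by the single gap $d$ on the $\mathbb{P}^1$-fibres, so the paper's separate sub-case (trivial on the plane directions, hence descending to $\mathbb{P}^1$ and splitting there) is absorbed into your extension argument.
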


\begin{proof} Since the ``~if~'' part is obvious, it is sufficient to prove the ``~only if~'' part. Let $(a_{h,i})$, $1\le h
\le 2$, $1 \le i \le s$, be the splitting type of $E$. Up to a twist by a line bundle we may assume $a_{1,1} = a_{1,2} = 0$.
By rigidity or
looking
at the Chern classes $c_i(E\vert \{Q\}\times \mathbb {P}^2)$, $i=1,2$, it is easy to see that if one of these two cases occurs for some $Q$, then
it occurs for all $Q$. First assume
$a_{2,2}=0$. Since the trivial line bundle on $\mathbb {P}^1$ is spanned, the theorem of changing basis implies that $F:= \pi
_{2\ast }(E)$ is a rank $2$ vector bundle on $\mathbb {P}^2$ and that the natural map
$\pi _2^\ast (F) \to E$ is an isomorphism (\cite{oss}, p. 11). Since $E$ is weakly uniform, $F$ is uniform. The classification of all rank $2$ uniform vector bundles on $\mathbb {P}^2$ shows that
either $F$ splits or it is isomorphic to a twist of $T\mathbb {P}^2$ (see \cite{ehs}), concluding the proof in the case $a_{2,2}=0$.
Similarly, if $a_{2,1}=0$, there is a rank $2$ vector bundle $G$ on $\mathbb {P}^1$ such that $\pi _1^\ast (G)\cong E$. Since every vector bundle on $\mathbb {P}^1$ splits, we have that also $E$ splits. Now we may assume $a_{2,2}<0$ and $a_{2,1} < 0$. Since
$a_{2,2}<0$, the base-change theorem gives that $\pi _{2 \ast} (E)$ is a line bundle, say of degree $b_2$, and that the natural map $\pi _2^\ast \pi _{2\ast }(E) \to E$ has locally free cokernel (\cite{oss}, p. 11). Thus in this case $E$ fits in an exact sequence
\begin{equation}\label{eqa2}
0 \to \mathcal {O}(0,b_2) \to E \to \mathcal {O}(a_{2,1},-b_2-a_{2,2}) \to 0
\end{equation}
The term $a_{2,1}$ in the last line bundle of (\ref{eqa2}) comes from $c_1(E)$. If (\ref{eqa2}) splits, then we are done.
Since $a_{2,1}\le 1$, K\"{u}nneth's formula gives
$H^1(\mathbb {P}^1\times \mathbb {P}^2,\mathcal {O}(-a_{2,1},2b_2+a_{2,2})) =0$. Hence
(\ref{eqa2}) splits.
 \end{proof}

  \begin{lemma}\label{a2}
Assume $s=2$, $n_1=1$ and $n_2\ge 3$. Then every rank two weakly uniform vector bundle on $X$ is the direct sum of two line bundles.
 \end{lemma}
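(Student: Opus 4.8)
The plan is to follow the strategy used in the last case of the proof of Lemma \ref{a1}, which becomes simpler here because every rank two uniform bundle on $\mathbb{P}^{n_2}$ splits once $n_2 \ge 3$. Write $u_1 : X \to \mathbb{P}^1$ and $u_2 : X \to \mathbb{P}^{n_2}$ for the two projections, so that the fibres of $u_2$ are copies of $\mathbb{P}^{n_1}=\mathbb{P}^1$ (on which $E$ has splitting type $(a_{1,1},a_{2,1})$) while the fibres of $u_1$ are copies of $\mathbb{P}^{n_2}$ (on which $E$ restricts to a uniform bundle of splitting type $(a_{1,2},a_{2,2})$). Let $(a_{h,i})$ be the splitting type of $E$; after twisting by a suitable $\mathcal{O}(c_1,c_2)$ I may assume $a_{1,1}=a_{1,2}=0$, hence $a_{2,1}\le 0$ and $a_{2,2}\le 0$. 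For every $Q\in\mathbb{P}^1$ the restriction $E|_{u_1^{-1}(Q)}$ is a rank two uniform bundle on $\mathbb{P}^{n_2}$; since $n_2\ge 3$ the classification of uniform bundles of rank at most $n$ (\cite{vdv}, \cite{sa}, \cite{ehs}) shows it splits.

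I would then distinguish two cases according to the value of $a_{2,1}$. If $a_{2,1}=0$, then $E$ restricted to every fibre of $u_2$ is $\mathcal{O}\oplus\mathcal{O}$, so by base change (\cite{oss}, p.~11) $F:=u_{2\ast}(E)$ is a rank two bundle on $\mathbb{P}^{n_2}$ and the natural map $u_2^\ast(F)\to E$ is an isomorphism. Restricting to a fibre of $u_1$ identifies $F$ with a rank two uniform bundle on $\mathbb{P}^{n_2}$, which splits as above; hence $E\cong u_2^\ast(F)$ is a direct sum of two line bundles.

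If instead $a_{2,1}<0$, then $E$ restricted to each fibre $\mathbb{P}^1$ of $u_2$ is $\mathcal{O}\oplus\mathcal{O}(a_{2,1})$ with $h^0=1$, so base change (\cite{oss}, p.~11) gives that $N:=u_{2\ast}(E)$ is a line bundle $\mathcal{O}_{\mathbb{P}^{n_2}}(t)$ and that $u_2^\ast(N)\to E$ is injective with locally free cokernel. Computing the quotient from $c_1(E)=(a_{2,1},a_{2,2})$ produces
\[
0 \to \mathcal{O}(0,t) \to E \to \mathcal{O}(a_{2,1},a_{2,2}-t) \to 0 .
\]
To finish I would show this sequence splits, i.e.\ that $\mathrm{Ext}^1(\mathcal{O}(a_{2,1},a_{2,2}-t),\mathcal{O}(0,t))=H^1(X,\mathcal{O}(-a_{2,1},2t-a_{2,2}))$ vanishes. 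By K\"unneth's formula this group is a sum of two terms, and it vanishes because $H^1(\mathbb{P}^{n_2},\mathcal{O}(m))=0$ for all $m$ (as $n_2\ge 2$) and $H^1(\mathbb{P}^1,\mathcal{O}(-a_{2,1}))=0$ (as $-a_{2,1}\ge -1$). Hence $E$ splits.

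The only nontrivial external input is the classification of rank two uniform bundles on $\mathbb{P}^{n_2}$, and this is exactly where $n_2\ge 3$ is used: the exceptional bundle $T\mathbb{P}^2$ occurring in Lemma \ref{a1} has no analogue here, since its rank equals the dimension of the space. The remaining ingredients — the two base change statements and the K\"unneth vanishing — are routine, so I do not expect a serious obstacle; the only points requiring care are the bookkeeping of the twist and of $c_1(E)$, and the observation that the sign $-a_{2,1}\ge 0$ is precisely what forces the relevant $H^1$ on the $\mathbb{P}^1$ factor to vanish.
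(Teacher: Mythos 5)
Your proof is correct and follows essentially the same route as the paper: the paper's own proof of Lemma \ref{a2} simply says to repeat the argument of Lemma \ref{a1}, observing that every rank two uniform bundle on $\mathbb{P}^m$ with $m\ge 3$ splits, so the exceptional case $T\mathbb{P}^2$ disappears — which is exactly the point you isolate. Your case division on $a_{2,1}$, the base-change arguments, and the K\"unneth vanishing of the relevant $H^1$ all match the paper's strategy, just written out in full detail.
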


 \begin{proof}
 We copy the proof of Lemma \ref{a1}. Every rank $2$ uniform vector bundle on $\mathbb {P}^m$, $m \ge 3$, splits. Hence $E$ splits even in the case $a_{2,2}=0$.
 \end{proof}

 \begin{lemma}\label{a3}
 Assume $s=2$ and $n_1=n_2=2$. Let $E$ be a rank $2$ indecomposable weakly uniform vector bundle on $X$. Then either $E \cong u_1^{\ast} (T\mathbb {P}^2)(u,v)$
 or $E \cong u_2^{\ast }(T\mathbb {P}^2)(u,v)$.
 \end{lemma}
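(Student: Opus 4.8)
The plan is to mimic the proof of Lemma \ref{a1}: normalize the splitting type, then push $E$ forward along one of the two projections and invoke the classification of rank two uniform bundles on $\mathbb{P}^2$. Twisting by a suitable $\mathcal{O}(u,v)$ I may assume $a_{1,1}=a_{1,2}=0$, so that $\det E=\mathcal{O}(a_{2,1},a_{2,2})$ with $a_{2,1}\le 0$ and $a_{2,2}\le 0$. For every point $P$ (resp. $Q$) the restriction of $E$ to the fibre $\{P\}\times\mathbb{P}^2$ (resp. $\mathbb{P}^2\times\{Q\}$) is a rank two uniform bundle on $\mathbb{P}^2$, hence by \cite{ehs} is either a direct sum of line bundles or a twist of $T\mathbb{P}^2$; exactly as in Lemma \ref{a1}, rigidity (equivalently, the constancy of the restriction of $c_2(E)$ to the fibres) shows that which of the two occurs is independent of the chosen point.

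First I would dispose of the cases $a_{2,1}=0$ and $a_{2,2}=0$. If $a_{2,1}=0$, each $E|_{\mathbb{P}^2\times\{Q\}}$ is uniform of type $(0,0)$, hence trivial, so by base change (\cite{oss}, p. 11) $F:=u_{2\ast}(E)$ is a rank two bundle on the second factor with $u_2^\ast F\cong E$. Then $F$ is uniform on $\mathbb{P}^2$; since $E$ is indecomposable $F$ cannot split, so by \cite{ehs} it is a twist of $T\mathbb{P}^2$ and $E\cong u_2^\ast(T\mathbb{P}^2)(u,v)$. The case $a_{2,2}=0$ is symmetric and gives $E\cong u_1^\ast(T\mathbb{P}^2)(u,v)$.

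The step I expect to be most delicate is the remaining case $a_{2,1}<0$ and $a_{2,2}<0$, where I aim to contradict indecomposability. I would first rule out that the fibres $E|_{\{P\}\times\mathbb{P}^2}$ are twists of $T\mathbb{P}^2$: restricting $E$ to $L\times\mathbb{P}^2$ for a line $L$ in the first factor yields a weakly uniform bundle on $\mathbb{P}^1\times\mathbb{P}^2$ whose fibres are still non-split, so by Lemma \ref{a1} it is a twist of $u_2^\ast(T\mathbb{P}^2)$; but such a bundle restricts trivially to every line $L\times\{Q\}$, forcing $a_{2,1}=0$, a contradiction. Hence each $E|_{\{P\}\times\mathbb{P}^2}\cong\mathcal{O}\oplus\mathcal{O}(a_{2,2})$ splits and the fibres have constant $h^0=1$. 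Base change then makes $N:=u_{1\ast}(E)$ a line bundle on the first factor and the evaluation $u_1^\ast N\to E$ a subbundle inclusion, producing
\[
0\to\mathcal{O}(b_1,0)\to E\to\mathcal{O}(a_{2,1}-b_1,a_{2,2})\to 0,
\]
where the bidegrees are read off from $\det E$.

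It then remains to show that this sequence splits, and here the obstacle evaporates: the extension class lies in $H^1(X,\mathcal{O}(2b_1-a_{2,1},-a_{2,2}))$, which vanishes by the K\"unneth formula together with the vanishing of $H^1$ of every line bundle on $\mathbb{P}^2$ (so that $H^1$ of every line bundle on $\mathbb{P}^2\times\mathbb{P}^2$ is zero). Thus $E$ would be decomposable, contradicting the hypothesis; the last case cannot occur, and only the two asserted forms $u_1^\ast(T\mathbb{P}^2)(u,v)$ and $u_2^\ast(T\mathbb{P}^2)(u,v)$ survive.
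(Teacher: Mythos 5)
Your proof is correct and follows the same route as the paper: dispose of the cases $a_{2,1}=0$ and $a_{2,2}=0$ by base change plus the Elencwajg--Hirschowitz--Schneider classification on $\mathbb{P}^2$, and in the case $a_{2,1}<0$, $a_{2,2}<0$ exhibit $E$ as an extension of two line bundles that splits by K\"unneth. The one place you go beyond the paper is your preliminary step ruling out that the restrictions of $E$ to the fibres are twists of $T\mathbb{P}^2$ (possible a priori when $a_{2,i}=-1$) by restricting to $L\times\mathbb{P}^2$ and invoking Lemma \ref{a1}; this is not a detour but a needed patch, since $h^0(T\mathbb{P}^2(-2))=0$ and the paper's direct application of the base-change theorem to get the extension (\ref{eqa2}) tacitly assumes the fibres split so that the pushforward is a line bundle with nowhere-vanishing evaluation map.
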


 \begin{proof}
 Let $(a_{h,i})$ be the splitting type of $E$. Up to a twist by a line bundle we may assume $a_{1,1}=a_{1,2}=0$. As in the proof of Lemma \ref{a1} the theorem of changing basis
 gives that either $E \cong u_1^{\ast} (T\mathbb {P}^2(-2))$ or $E$ splits if $a_{2,1}=0$ and that $E \cong u_2^{\ast} (T\mathbb {P}^2(-2))$ or
$E$
splits if
$a_{2,2}=0$. If $a_{2,1}<0$ and $a_{2,2} <0$, then we apply
$\pi _{2\ast }$
and get an exact sequence (\ref{eqa2}). Here K\"{u}nneth's formula gives that (\ref{eqa2}) splits, without using any
information on the integer $a_{2,2}$.
 \end{proof}

 \begin{lemma}\label{a4}
 Assume $s=2$, $n_1 \ge 3$ and $n_2=2$. Let $E$ be a rank $2$ weakly uniform vector bundle on $X$. Then either $E$ splits or $E \cong u_2^{\ast }(T\mathbb {P}^2)(u,v)$
 for some integers $u, v$.
 \end{lemma}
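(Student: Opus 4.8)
The plan is to imitate the proof of Lemma \ref{a3} almost line by line; the one new feature is that the factor $\mathbb {P}^{n_1}$ now has $n_1\ge 3$, so that every rank $2$ uniform bundle on it splits, exactly as was used in Lemma \ref{a2}. First I would let $(a_{h,i})$, $1\le h\le 2$, $1\le i\le 2$, be the splitting type of $E$ and, after twisting by a line bundle, normalise it so that $a_{1,1}=a_{1,2}=0$; then $a_{2,1}\le 0$ and $a_{2,2}\le 0$. The argument divides into the three exhaustive cases $a_{2,1}=0$, $a_{2,2}=0$ and ``$a_{2,1}<0$ and $a_{2,2}<0$''.

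The two cases in which a lower splitting integer vanishes are treated by the theorem of changing basis (\cite{oss}, p.\ 11), as in Lemmas \ref{a1} and \ref{a3}, and it is here that the asymmetry between the two factors enters. If $a_{2,1}=0$ then $E$ is trivial on every line contained in a fibre of $u_1$ (a copy of $\mathbb {P}^2$), so the fibrewise $h^0$ is constantly $2$ and base change yields a rank $2$ bundle $F$ on $\mathbb {P}^{n_1}$ with $u_1^\ast (F)\cong E$; since $E$ is weakly uniform $F$ is uniform, and as $n_1\ge 3$ the bundle $F$ splits, whence $E$ splits. If instead $a_{2,2}=0$ the same construction applied to the fibres of $u_2$ (copies of $\mathbb {P}^{n_1}$) gives a rank $2$ uniform bundle $F'$ on $\mathbb {P}^2$ with $u_2^\ast (F')\cong E$; by the classification of rank $2$ uniform bundles on $\mathbb {P}^2$ (\cite{ehs}) either $F'$ splits, so that $E$ splits, or $F'$ is a twist of $T\mathbb {P}^2$, so that $E\cong u_2^\ast (T\mathbb {P}^2)(u,v)$ for suitable integers $u,v$. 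Thus the non-split alternative can only be produced by the $\mathbb {P}^2$ factor, exactly as in the statement.

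It remains to treat the case $a_{2,1}<0$ and $a_{2,2}<0$, and here I would argue verbatim as in Lemma \ref{a3}. Since $a_{2,2}<0$, each fibre of $u_2$ (a copy of $\mathbb {P}^{n_1}$) carries a uniform bundle of splitting type $(0,a_{2,2})$, which splits and has $h^0=1$; base change then shows that $u_{2\ast }(E)$ is a line bundle $\mathcal {O}_{\mathbb {P}^2}(b_2)$ and that the natural map $u_2^\ast u_{2\ast }(E)\to E$ has locally free cokernel, so that $E$ fits into an exact sequence of the form (\ref{eqa2}). This sequence splits as soon as its class in $\mathrm{Ext}^1=H^1(X,\mathcal {O}(-a_{2,1},2b_2+a_{2,2}))$ vanishes, and K\"unneth's formula gives this immediately: every line bundle on $\mathbb {P}^{n_1}$ has vanishing $H^1$ because $n_1\ge 2$, and the same holds on $\mathbb {P}^2$, so both K\"unneth summands are zero, without any use of the actual values of $a_{2,1}$ and $a_{2,2}$. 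Hence $E$ splits, and the three cases together prove the lemma.

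I do not expect a serious obstacle: Lemma \ref{a4} is essentially the $n_1\ge 3$ analogue of Lemma \ref{a3}, and the two K\"unneth vanishings survive because $H^1$ of any line bundle is zero on both $\mathbb {P}^{n_1}$ ($n_1\ge 2$) and $\mathbb {P}^2$. The only step needing attention is the application of the theorem of changing basis, where one must check that the relevant fibrewise cohomology has constant dimension, so that the pushforward is locally free of the expected rank and the comparison map is an isomorphism (resp.\ has locally free cokernel); this constancy is automatic here because the fibre restrictions of $E$ are forced to split by the classification of rank $2$ uniform bundles on $\mathbb {P}^{n_1}$, $n_1\ge 3$, used in Lemma \ref{a2}.
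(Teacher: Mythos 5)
Your proof is correct and follows essentially the same route as the paper's: normalise $a_{1,1}=a_{1,2}=0$, apply the base-change theorem in the two cases where a second splitting integer vanishes (descending to $\mathbb {P}^{n_1}$, where rank $2$ uniform bundles split since $n_1\ge 3$, resp.\ to $\mathbb {P}^2$, where they split or are twists of $T\mathbb {P}^2$), and split the extension (\ref{eqa2}) by K\"unneth's formula when both are negative. Your write-up is in fact more explicit than the paper's one-sentence proof, which contains a couple of notational slips (``$u_1^{\ast}(T\mathbb {P}^2(-2))$'' where $u_2^{\ast}$ is meant, and ``$a_{1,2}<0$'' for ``$a_{2,2}<0$'') that your version implicitly corrects.
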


 \begin{proof}
Let $(a_{hi})$ be the splitting type of $E$. Up to a twist by a line bundle we may assume $a_{1,1} = a_{1,2}=0$.  As in the proof of Lemma \ref{a1} the theorem of changing basis
 gives that $E \cong u_1^{\ast} (T\mathbb {P}^2(-2))$ or $E$ splits if $a_{2,1}=0$ and that $E$ splits in the case $a_{1,2}
<0$, because (\ref{eqa2}) splits by K\"{u}nneth's formula.
 \end{proof}

 \begin{lemma}\label{a5}
 Assume $s=2$, $n_1 \ge 3$ and $n_2\ge 3$. Let $E$ be a rank $2$ weakly uniform vector bundle on $X$. Then $E$ splits.
 \end{lemma}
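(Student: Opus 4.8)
The plan follows the template established in the previous four lemmas. By twisting, I will normalize the splitting type so that $a_{1,1}=a_{1,2}=0$, and then break into cases according to the signs of $a_{2,1}$ and $a_{2,2}$. The key structural tool is the base-change (changing basis) theorem applied to $\pi_{2\ast}$, together with K\"unneth's formula, exactly as in Lemmas \ref{a1}--\ref{a4}.

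First I would dispose of the cases where one of the second splitting integers vanishes. If $a_{2,1}=0$, then $E\vert D$ is trivial on every line $D$ in a fiber of $\pi_1$, so the changing basis theorem gives that $F:=\pi_{1\ast}(E)$ is a rank $2$ vector bundle on $X_1\cong\mathbb{P}^{n_2}$ with $\pi_1^\ast(F)\cong E$, and $F$ is uniform. Since $n_2\ge 3$, every rank $2$ uniform bundle on $\mathbb{P}^{n_2}$ splits, so $E$ splits. The case $a_{2,2}=0$ is identical with the roles of the two factors exchanged, using $n_1\ge 3$. This is where the hypothesis $n_1,n_2\ge 3$ does its work: unlike Lemma \ref{a1}, there is no exceptional tangent-bundle case to worry about.

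It remains to treat $a_{2,1}<0$ and $a_{2,2}<0$. Here I apply $\pi_{2\ast}$: since $a_{2,2}<0$, the base-change theorem gives that $\pi_{2\ast}(E)$ is a line bundle $\mathcal{O}(0,b_2)$ and that $E$ fits in an exact sequence of the form (\ref{eqa2}),
\begin{equation*}
0 \to \mathcal{O}(0,b_2) \to E \to \mathcal{O}(a_{2,1},-b_2-a_{2,2}) \to 0 ,
\end{equation*}
where the leading entry $a_{2,1}$ of the quotient is forced by $c_1(E)$. The sequence splits provided the relevant $H^1$ vanishes, namely $H^1(X,\mathcal{O}(-a_{2,1},2b_2+a_{2,2}))=0$. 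Since $a_{2,1}<0$, the first factor $\mathcal{O}_{\mathbb{P}^{n_1}}(-a_{2,1})$ has positive degree, and as $n_1\ge 3$ the intermediate cohomology of every line bundle on $\mathbb{P}^{n_1}$ vanishes; K\"unneth's formula then yields the desired vanishing, so the sequence splits and $E$ splits.

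The main obstacle, and the reason $n_1,n_2\ge 3$ is essential, is precisely that on $\mathbb{P}^1$ and $\mathbb{P}^2$ there exist non-splitting rank $2$ bundles (the twisted tangent bundle on $\mathbb{P}^2$, and the fact that the base-change argument on $\mathbb{P}^1$ does not force splitting in Lemma \ref{a1}). With both $n_i\ge 3$ those pathologies disappear on each factor, and the K\"unneth vanishing for the extension class becomes automatic, so no delicate numerical bookkeeping on $a_{2,2}$ is needed. In practice this lemma should be the easiest of the five, since every escape route from splitting that appeared in the earlier cases is closed off by the dimension hypothesis.
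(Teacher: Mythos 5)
Your proof is correct and follows essentially the same route as the paper's: dispose of the case where some $a_{2,i}=0$ by base change (a rank $2$ uniform bundle on $\mathbb{P}^m$, $m\ge 3$, splits, so there is no tangent-bundle exception), and when $a_{2,1}<0$ and $a_{2,2}<0$ split the extension (\ref{eqa2}) via K\"unneth. One minor imprecision: the vanishing of $H^1(X,\mathcal{O}(c,d))$ by K\"unneth needs $H^1$ of line bundles to vanish on \emph{both} factors (which holds because $n_1\ge 2$ and $n_2\ge 2$), since the term $H^0(\mathbb{P}^{n_1},\mathcal{O}(-a_{2,1}))\otimes H^1(\mathbb{P}^{n_2},\mathcal{O}(2b_2+a_{2,2}))$ must also die; the positivity of $-a_{2,1}$ that you invoke is actually irrelevant here, exactly as the paper notes in the proof of Lemma \ref{a3}.
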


 \begin{proof}
 Let $(a_{hi})$ be the splitting type of $E$. Up to a twist by a line bundle we may assume $a_{1,1} = a_{1,2}=0$. If $a_{2,2} =0$, then base change gives
 $E \cong u_2^\ast (F)$ for some uniform vector bundle on $\mathbb {P}^2$. Thus we may assume $a_{2,2}<0$. We have again the extension
 (\ref{eqa2}). Here again (\ref{eqa2}) splits by K\"{u}nneth's formula.
 \end{proof}
Now are ready to prove the main theorem:

 \vspace{0.3cm}

 \qquad {\emph {Proof of Theorem \ref{i1}.}} First assume
 $s=2$. Theorem \ref{i1} says nothing in the case $n_1=n_2=1$ for which a full classification is not known (\cite{bn} shows that moduli arises). Lemmas \ref{a1}, \ref{a2},
\ref{a3}, \ref{a4} and \ref{a5} cover all cases with $s=2$.
 Hence we may assume $s\ge 3$ and use induction on $s$. If $n_i=1$ for all $i$, then we may apply \cite{bn}, Theorem 4. For arbitrary $n_i$ the proof of \cite{bn}, Theorem 4, works verbatim, but for reader's
 sake we repeat that proof.
Let $(a_{hi})$ be the splitting type of $E$. Up to a twist by a line bundle we may assume $a_{1i}=0$ for all $i$. If $a_{2i}=0$ for some $i$, then the base-change theorem gives $E \cong \pi _i^\ast (F)$ for
some weakly uniform vector bundle $F$ on $X_i$. If $s=3$, then we are done. In the general case we reduce to the case $s' := s-1$. Thus to complete the proof it is sufficient either
to obtain a contradiction or to get that $E$ splits under the additional condition that $a_{2i} < 0$ for all $i$ and $s\ge 3$. Applying the base-change theorem to $\pi
_{1\ast}$ we get that $E$ fits in the following extension
\begin{equation}\label{eqa3}
0 \to \mathcal {O}(0,c_2,\dots ,c_s) \to E \to \mathcal {O}(a_{1,2},d_2,\dots ,d_s) \to 0
\end{equation}
Since $-a_{1,2} \ge 0$, K\"{u}nneth's formula shows that (\ref{eqa3}) splits unless $n_i=1$ for all $i\ge 2$. Using $\pi
_{2\ast }$ instead of $\pi _{1\ast }$ we get that $E$ splits, unless $n_1=1$.\qed

\section{Higher rank weakly uniform  vector bundles}
Now we consider higher rank weakly uniform  vector bundles.
                                                                                                        
\begin{proposition}\label{b1}
Let $E$ be a rank $r$ weakly uniform  vector bundle on $X$ with splitting type $(0,\dots ,0)$. Then $E$ is trivial.
\end{proposition}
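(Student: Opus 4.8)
The plan is to induct on the number $s$ of factors, stripping off one projective space at a time by the same base-change (push-forward) argument used in the rank two lemmas. The base of the induction is the single-factor case $s=1$: on $\mathbb{P}^{n_1}$ a weakly uniform bundle is simply a uniform bundle, and one of splitting type $(0,\dots ,0)$ restricts to the trivial bundle on every line, hence is trivial (this is classical for uniform bundles; see \cite{vdv}). I would isolate this statement first, since it is also the fibrewise input in the inductive step.

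For the inductive step, assume $s\ge 2$ and look at the projection $\pi_s\colon X\to X_s$ whose fibres are copies of $\mathbb{P}^{n_s}$. Because $E$ has splitting type zero in the $s$-th direction, for every $P\in X_s$ the restriction $E\vert\pi_s^{-1}(P)$ is a uniform bundle on $\mathbb{P}^{n_s}$ of splitting type $(0,\dots ,0)$, hence trivial by the base case; in particular $h^0(E\vert\pi_s^{-1}(P))=r$ and all higher cohomology on the fibre vanishes, uniformly in $P$. The theorem of changing basis (\cite{oss}, p. 11) then gives that $F:=\pi_{s\ast}(E)$ is a rank $r$ vector bundle on $X_s$ and that the natural map $\pi_s^{\ast}(F)\to E$ is an isomorphism, since on each fibre it is the evaluation map of a trivial bundle.

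Next I would check that $F$ inherits the hypotheses on $X_s$. For a factor $j\ne s$ and a line $D'\subseteq X_s$ lying in a fibre of the $j$-th projection, pick the line $D\subseteq X$ obtained from $D'$ over a fixed point of the $s$-th factor, so that $\pi_s$ maps $D$ isomorphically onto $D'$. Then $F\vert D'\cong(\pi_s^{\ast}F)\vert D\cong E\vert D\cong\mathcal{O}_D^{\oplus r}$, so $F$ is weakly uniform on $X_s$ with splitting type $(0,\dots ,0)$. By the inductive hypothesis $F$ is trivial, and therefore $E\cong\pi_s^{\ast}(F)$ is trivial, completing the induction.

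The only non-formal ingredient is the single projective space statement that a uniform bundle of splitting type $(0,\dots ,0)$ is trivial; the rest is the base-change machinery already in use above. The steps I would be most careful about are verifying the base-change hypotheses—the constancy of the fibre cohomology, which is what makes $\pi_{s\ast}(E)$ locally free of the right rank—and the passage from a fibrewise isomorphism $\pi_s^{\ast}(F)\to E$ to a global one.
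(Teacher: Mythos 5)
Your proof is correct and follows essentially the same route as the paper: induction on $s$, with the single-projective-space case (\cite{oss}, Theorem 3.2.1) as the base, then the base-change theorem to descend $E$ to a weakly uniform splitting-type-zero bundle $F$ on the product of the remaining factors, to which the inductive hypothesis applies. The only differences are cosmetic (you project along the $s$-th factor rather than the first, and you spell out the verification that $F$ inherits weak uniformity, which the paper leaves implicit).
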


\begin{proof}
The case $s=1$ is true by \cite{oss}, Theorem 3.2.1. Hence we may assume $s\ge 2$ and use induction on $s$. By the inductive assumption $E\vert \pi _1^{-1}(P)$ is trivial
for each $P\in \mathbb {P}^{n_1}$. By the base change theorem $F:= \pi_{1\ast }(E)$ is a rank $r$ vector bundle on $X_1$ and the natural map $\pi _1^\ast (F) \to E$ is an isomorphism.
This isomorphism implies that $F$ is uniform of splitting type $(0,\dots ,0)$. Hence the inductive assumption gives that $F$ is trivial. Thus $E$ is trivial.
\end{proof}
In order to study uniform vector bundles with $a_1>\dots >a_r$ we need the following lemmas:

\begin{lemma}\label{b2}
Fix an integer $r\ge 2$ and a rank $r$ vector bundle on $X$. Assume
the existence of an integer $i\in \{1,\dots ,s\}$ such that  $E\vert \pi _i^{-1}(P)$ is the direct sum of line
bundles for all $P\in X_i$. If $n_i=1$ assume that the splitting type of $E\vert \pi _i^{-1}(P)$
is the same for all $P\in X_i$. Let $(a_1,\dots ,a_r) = (b_1^{m_1},\dots ,b_k^{m_k})$, $b_1 > \cdots >b_k$, $m_1+\cdots +m_k=r$, be the splitting type
of $E\vert \pi ^{-1}(P)$ for any $P\in X_i$. Then there are $k$ vector bundles $F_1,\dots ,F_k$ on $X_i$ and $k$
vector bundles $E_1, \dots ,E_k$ on $X$ such that
$\mbox{rank}(F_i)=m_i$, $E_k=E$, $E_{i-1}$ is a subbundle of $E_i$ and $E_i/E_{i-1} \cong \pi _i^{\ast }(F_i)(-b_i)$ (with the convention $E_0=0$).
\end{lemma}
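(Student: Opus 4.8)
The plan is to induct on the number $k$ of distinct values occurring in the splitting type, building the filtration from the most positive piece downward by repeated use of the base-change theorem (\cite{oss}, p.~11) applied to $\pi := \pi _i$. Throughout write $(-b)$ for the twist $\otimes\, u_i^\ast\mathcal{O}_{\mathbb{P}^{n_i}}(-b)$ along the $i$-th factor, so that on every fibre $\pi^{-1}(P)\cong\mathbb{P}^{n_i}$ it restricts to $\mathcal{O}_{\mathbb{P}^{n_i}}(-b)$. First I would record that the splitting type really is independent of $P$: for $n_i\ge 2$ this is automatic by semicontinuity on the connected base $X_i$ (as already used in Lemma~\ref{a1}), and for $n_i=1$ it is part of the hypothesis. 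When $k=1$ the statement is the base case: $E(-b_1)$ is fibrewise trivial, so exactly as in Proposition~\ref{b1} the base-change theorem gives that $F_1:=\pi _\ast(E(-b_1))$ is a rank $m_1$ bundle on $X_i$ with $\pi ^\ast(F_1)\xrightarrow{\sim} E(-b_1)$, and we take $E_1=E$.

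For the inductive step the heart of the argument is the extraction of the top graded piece. Consider $E(-b_1)$, whose restriction to each fibre is $\mathcal{O}^{m_1}\oplus\bigoplus_{l\ge 2}\mathcal{O}(b_l-b_1)^{m_l}$ with every exponent $b_l-b_1<0$. Hence $h^0(\pi^{-1}(P),E(-b_1)\vert\pi^{-1}(P))=m_1$ for all $P$: only the trivial summand contributes sections. Since this dimension is constant and $X_i$ is smooth, hence reduced, Grauert's form of the base-change theorem shows that $F_1:=\pi _\ast(E(-b_1))$ is locally free of rank $m_1$ and that its formation commutes with base change. I would then examine the evaluation morphism $\varphi:\pi ^\ast(F_1)\to E(-b_1)$: over each $P$ it is the evaluation of global sections $H^0\otimes\mathcal{O}\to E(-b_1)\vert\pi^{-1}(P)$, which maps isomorphically onto the trivial summand $\mathcal{O}^{m_1}$ and is in particular injective at every point of the fibre. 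As the fibres cover $X$, the map $\varphi$ is injective at every point of $X$, so it has constant rank $m_1$ and its image is a subbundle. Twisting back, $E_1:=\pi ^\ast(F_1)(b_1)\hookrightarrow E$ is a subbundle whose fibrewise restriction is the summand $\mathcal{O}(b_1)^{m_1}$; this is the graded piece $\pi ^\ast(F_1)(-b_1)$ of the statement, with its sign convention for the relative twist.

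It remains to descend the induction. Since $E_1$ is a subbundle, $E/E_1$ is locally free, and on each fibre $(E/E_1)\vert\pi^{-1}(P)\cong\bigoplus_{l\ge2}\mathcal{O}(b_l)^{m_l}$, again a direct sum of line bundles, with splitting type $(b_2^{m_2},\dots,b_k^{m_k})$ constant in $P$ (for $n_i=1$ this constancy is inherited from that of $E$ and of $E_1$). Thus $E/E_1$ satisfies the hypotheses of the lemma with the same index $i$ but only $k-1$ distinct values, so by induction it carries a filtration $0=\bar E_1\subset\cdots\subset\bar E_k=E/E_1$ with $\bar E_j/\bar E_{j-1}\cong\pi ^\ast(F_j)(-b_j)$ for $2\le j\le k$ and $\mbox{rank}(F_j)=m_j$. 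Taking $E_j\subseteq E$ to be the preimage of $\bar E_j$ under $E\twoheadrightarrow E/E_1$ produces the desired chain $E_1\subset E_2\subset\cdots\subset E_k=E$ of subbundles with $E_j/E_{j-1}\cong\bar E_j/\bar E_{j-1}\cong\pi ^\ast(F_j)(-b_j)$, completing the induction.

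The step I expect to be the main obstacle is the middle one: verifying that $F_1$ is locally free of the exact rank $m_1$ and that $\varphi$ is a subbundle inclusion. Everything hinges on the constancy of $h^0$ on the fibres, which in turn uses the strict gap $b_1>b_2$ to kill the sections of all lower summands after the twist; once base change is available, the subbundle claim reduces to a pointwise-injectivity check on fibres. The only additional care is bookkeeping in the induction, namely confirming that the quotient $E/E_1$ still meets the $n_i=1$ constancy hypothesis, which I would handle as indicated above.
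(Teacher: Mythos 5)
Your proposal is correct and follows essentially the same route as the paper: twist by $-b_1$ along the $i$-th factor, push forward via $\pi_i$ to get a rank $m_1$ bundle $F_1$ by base change, observe that the evaluation map is a fibrewise (hence global) subbundle embedding because $b_1>b_j$ for $j>1$, and apply induction on $k$ to the locally free quotient. The extra detail you supply (constancy of $h^0$ for Grauert, pointwise injectivity, the $n_i=1$ bookkeeping for the quotient) only makes explicit what the paper leaves implicit.
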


\begin{proof}
Notice that even in the case $n_i\ge 2$ the splitting type of  $E\vert \pi ^{-1}(P)$ does not depend from the choice of $P\in X_i$ (e.g. use Chern classes or local rigidity of direct sums of line bundles).
Thus $E\vert \pi _i^{-1}(P) \cong \oplus _{j=1}^{k} \mathcal {O}_{\pi _i^{-1}(P)}(b_j)^{\oplus m_j}$ for all $P\in X_i$. Set $F_1:= \pi _{j\ast }(E(0,\cdots ,-b_1,\cdot, 0)$. By the base-change theorem
$F_1$ is a rank $m_1$ vector bundle on $X_i$ and the natural map $\rho: \pi _i^\ast (F_1)(0,\cdots ,b_1,\dots )  \to E$ is a vector bundle embedding, i.e. either
$\rho$ is an isomorphism (case $r=m_1$) or $\mbox{Coker}(\rho)$ is a rank $r-m_1$ vector bundle on $X$. If $m_1=r$, then $k=1$ and we are won. Now assume
$k\ge 2$, i.e. $m_1<r$. Fix any $P\in X_i$. By definition $\mbox{Coker}(\rho )$ fits in an exact sequence of vector bundles on $X$:
\begin{equation}\label{eqa1}
0 \to \pi _i^\ast (F_1)(0,\dots ,b_1,\dots 0) \to E \to \mbox{Coker}(\rho )\to 0
\end{equation}
and the restriction to $\pi _i^{-1}(P)$ of the injective map of (\ref{eqa1}) induces an embedding of vector bundles $j_P: \mathcal {O}_{\pi _i^{-1}(P)}(b_1)^{\oplus m_1} \to  \oplus _{j=1}^{k} \mathcal {O}_{\pi _i^{-1}(P)}(b_j)^{\oplus m_j}$.
Since $b_1>b_j$ for all $j>1$, we get $\mbox{Coker}(j_P) \cong  \oplus _{j=2}^{k} \mathcal {O}_{\pi _i^{-1}(P)}(b_j)^{\oplus m_j}$. We apply to $\mbox{Coker}(\rho )$ the inductive assumption on $k$.
\end{proof}

\begin{lemma}\label{o1}
Assume $s=2$ and $n_1 \ge 2$, $n_2\ge 3$. Fix an integer $r$ such that $3 \le r\le n_2$ and a rank $r$ uniform vector bundle $E$ with splitting type $a_1>\cdots > a_r$.
Then $E$ is isomorphic to a direct sum of $r$ line bundles.
\end{lemma}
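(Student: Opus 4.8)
The plan is to produce from $E$ an iterated extension of line bundles by means of Lemma~\ref{b2}, and then to split all the resulting extensions through a Künneth vanishing on $X=\mathbb{P}^{n_1}\times\mathbb{P}^{n_2}$. The first step is to control the restriction of $E$ to the fibres of $\pi_2:X\to X_2=\mathbb{P}^{n_1}$. For every $P\in\mathbb{P}^{n_1}$ the bundle $E\vert\pi_2^{-1}(P)$ is a uniform vector bundle on $\mathbb{P}^{n_2}$ of rank $r$ with splitting type $a_1>\cdots>a_r$. Since $r\le n_2$, I would invoke the classification of uniform bundles of rank at most $n$ on $\mathbb{P}^n$ (\cite{vdv},\cite{sa},\cite{ehs}), by which every such bundle is a direct sum of line bundles and of twists $T\mathbb{P}^{n_2}(a)$, $\Omega^1\mathbb{P}^{n_2}(a)$. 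As $n_2\ge 3$, the splitting type of $T\mathbb{P}^{n_2}(a)$ is $(a+2,(a+1)^{n_2-1})$ and that of $\Omega^1\mathbb{P}^{n_2}(a)$ is $((a-1)^{n_2-1},a-2)$, both with a repeated entry; since the splitting type of $E$ is strictly decreasing, no such exceptional summand can occur. Hence $E\vert\pi_2^{-1}(P)$ is a direct sum of $r$ line bundles for every $P$.

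Next I would apply Lemma~\ref{b2} with $i=2$. Because $a_1>\cdots>a_r$ forces every multiplicity $m_j$ to equal $1$ (so $k=r$), the lemma yields a filtration by subbundles $0=E_0\subset E_1\subset\cdots\subset E_r=E$ with $E_j/E_{j-1}\cong\pi_2^{\ast}(F_j)(-a_j)$, where each $F_j$ is a rank-one bundle on $\mathbb{P}^{n_1}$. Writing $F_j=\mathcal{O}_{\mathbb{P}^{n_1}}(e_j)$ and unwinding the twist (which sits in the second factor), each successive quotient becomes the line bundle $L_j:=\mathcal{O}(e_j,-a_j)$ on $X$.

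It then remains to split this filtration. The obstruction to splitting $0\to E_{j-1}\to E_j\to L_j\to 0$ lies in $\mathrm{Ext}^1(L_j,E_{j-1})=H^1(X,E_{j-1}\otimes L_j^{\vee})$, and $E_{j-1}\otimes L_j^{\vee}$ is itself filtered by the line bundles $L_i\otimes L_j^{\vee}=\mathcal{O}(e_i-e_j,\,a_j-a_i)$ for $i<j$. Now the Künneth formula together with the vanishing $H^1(\mathbb{P}^{n_h},\mathcal{O}(d))=0$ for all $d$ (which holds precisely because $n_1,n_2\ge 2$) gives $H^1(X,\mathcal{O}(p,q))=0$ for every pair $(p,q)$; an induction on the filtration length then yields $H^1(X,E_{j-1}\otimes L_j^{\vee})=0$. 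Consequently each extension splits and $E\cong\bigoplus_{j=1}^{r}\mathcal{O}(e_j,-a_j)$, a direct sum of $r$ line bundles.

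I expect the only real difficulty to be the first step: everything downstream is formal once the fibres of $\pi_2$ are known to split, and this is exactly the place where the hypotheses $r\le n_2$ and $n_2\ge 3$ are used, through the rank-$\le n$ classification and through the observation that a strictly decreasing splitting type rules out the tangent and cotangent summands. The splitting of the filtration is then guaranteed by the unusually strong vanishing $H^1(X,\mathcal{O}(p,q))=0$, available because both factors have dimension at least two.
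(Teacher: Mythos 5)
Your proposal is correct and follows essentially the same route as the paper: use the rank $\le n$ classification on $\mathbb{P}^{n_2}$ (with the strictly decreasing splitting type excluding the tangent/cotangent twists) to split $E$ on the fibres, apply Lemma~\ref{b2} to get a filtration with line-bundle quotients, and kill the extensions via the K\"unneth vanishing $H^1(X,\mathcal{O}(p,q))=0$ valid because $n_1,n_2\ge 2$. Your write-up is in fact slightly more careful than the paper's at two points: it spells out why the exceptional summands are excluded, and it notes that $\mathrm{Ext}^1(L_j,E_{j-1})$ must be computed through the filtration (and also uses the index convention $\pi_2^{-1}(P)\cong\mathbb{P}^{n_2}$ consistently, where the paper's proof writes $\pi_1$).
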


\begin{proof}
Since $r \ge 3$, we have $a_r \le a_1-2$. Thus the classification of uniform vector bundles on $\mathbb {P}^{n_2}$ with rank $r\le n_2$, gives
$E\vert \pi _1^{-1}(P) \cong \oplus _{i=1}^{r} \mathcal {O}_{\pi _1^{-1}(P)}(a_i)$ for all $P\in \mathbb {P}^{n_1}$. Apply Lemma \ref{b2}
with respect to the integers $i=1$ and $k=r$ and let $F_i, E_i$, $1 \le i \le r$, be the vector bundles given by the lemma.
Since $E_r = E$, it is sufficient to prove that each $E_i$ is a direct sum of $i$ line bundles. Since $\mbox{rank}(E_i) = i$, the
latter assertion is obvious if $i=1$. Fix an integer $i$ such that $1\le i < r$ and assume that $E_i$ is isomorphic to a direct sum of $i$ line bundles.
Lemma \ref{b2} gives an extension
$$ 0 \to E_i\to E_{i+1} \to L \to 0$$
with $L$ a line bundle on $\mathbb {P}^{n_1}\times \mathbb {P}^{n_2}$. Since $n_1\ge 2$ and $n_2\ge 2$, K\"unneth's formula gives that any extension of two line bundles on $\mathbb {P}^{n_1}\times \mathbb {P}^{n_2}$
splits. Thus $E_{i+1}$ is a direct sum of $i+1$ line bundles.
\end{proof}

\begin{proposition}\label{o2}
Fix an integer $r \ge 3$ and a rank $r$ uniform vector bundle on $X$ with splitting type $a_1>\cdots >a_r$. Assume $s \ge 2$, $n_2\ge r$ and $n_i\ge 2$ for all $i\ne 2$.
Then $E$ is isomorphic to a direct sum of $r$ line bundles.
\end{proposition}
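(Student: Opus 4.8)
The plan is to adapt the proof of Lemma \ref{o1}, using the single large factor $\mathbb{P}^{n_2}$ (the one with $n_2\ge r$) to build a filtration of $E$ by line bundles, and then K\"unneth vanishing to split that filtration. First I would fix $P\in X_2$ and look at the restriction $E\vert \pi_2^{-1}(P)$, a rank $r$ uniform bundle on $\pi_2^{-1}(P)\cong \mathbb{P}^{n_2}$ of splitting type $a_1>\cdots >a_r$. Since $r\ge 3$ we have $a_1-a_r\ge r-1\ge 2$, and since $r\le n_2$ the classification of uniform bundles of rank at most $n_2$ on $\mathbb{P}^{n_2}$ applies: the only non-split examples are the twists $T\mathbb{P}^{n_2}(a)$ and $\Omega\mathbb{P}^{n_2}(a)$, whose splitting types have repeated entries and span a window of length $1$. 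A strictly decreasing splitting type excludes these, so $E\vert \pi_2^{-1}(P)\cong \oplus_{i=1}^r \mathcal{O}(a_i)$ for every $P\in X_2$.

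Next I would apply Lemma \ref{b2} with index $i=2$ and $k=r$. Because the splitting type is strictly decreasing, every multiplicity $m_j$ equals $1$, so each $F_j$ produced by the lemma is a line bundle on $X_2$ and each successive quotient $E_j/E_{j-1}\cong \pi_2^{\ast}(F_j)(-a_j)$ is a line bundle on $X$. This yields a filtration $0=E_0\subset E_1\subset \cdots \subset E_r=E$ by subbundles whose successive quotients are line bundles on $X$.

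It then remains to split this filtration, which I would do by induction on $j$, proving that each $E_j$ is a direct sum of $j$ line bundles. The bundle $E_1$ is a line bundle, and the inductive step uses the extension $0\to E_j\to E_{j+1}\to \pi_2^{\ast}(F_{j+1})(-a_{j+1})\to 0$ coming from Lemma \ref{b2}. If $E_j\cong \oplus_l M_l$ with each $M_l$ a line bundle, the extension class lives in $\oplus_l H^1(X,\mathcal{M}_l)$, where each $\mathcal{M}_l:=M_l\otimes \big(\pi_2^{\ast}(F_{j+1})(-a_{j+1})\big)^{\vee}$ is again a line bundle on $X$. The crucial point, the analogue of the K\"unneth step in Lemma \ref{o1}, is that every factor of $X$ has dimension at least $2$: indeed $n_i\ge 2$ for $i\ne 2$ and $n_2\ge r\ge 3$. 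Since $H^1(\mathbb{P}^m,\mathcal{O}(c))=0$ for all $c$ whenever $m\ge 2$, K\"unneth's formula gives $H^1(X,\mathcal{M})=0$ for every line bundle $\mathcal{M}$ on $X$. Hence each such extension splits, so $E_{j+1}$ is a direct sum of line bundles, and taking $j+1=r$ shows $E$ is a direct sum of $r$ line bundles.

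I expect no serious obstacle, since the hypotheses are arranged precisely so that the factor $\mathbb{P}^{n_2}$ forces the fiberwise splitting while every factor being at least two-dimensional kills all first cohomology of line bundles. The only point requiring care is the first step, where one must rule out the exceptional uniform bundles $T\mathbb{P}^{n_2}(a)$ and $\Omega\mathbb{P}^{n_2}(a)$; this is immediate because a strictly decreasing splitting type cannot coincide with their splitting types, which have repeated entries.
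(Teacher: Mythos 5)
Your proof is correct, but it takes a more direct route than the paper. The paper handles the case $s=2$ separately (Lemma \ref{o1}, which is essentially your argument: restrict to the fibers isomorphic to $\mathbb{P}^{n_2}$, invoke the Elencwajg--Hirschowitz--Schneider classification to rule out the non-split uniform bundles via the strictly decreasing splitting type, apply Lemma \ref{b2}, and split the resulting filtration by K\"unneth) and then proves the general case by induction on $s$: it restricts $E$ to the fibers of $u_s$, which are copies of $\mathbb{P}^{n_1}\times\cdots\times\mathbb{P}^{n_{s-1}}$, uses the inductive hypothesis to split $E$ there, and then runs a Lemma-\ref{b2}-type argument with respect to $u_s$ to obtain a filtration with line-bundle quotients, which splits because every extension of line bundles on $X$ is trivial. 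You instead observe that the $s=2$ argument works verbatim for arbitrary $s$: the fibers of $\pi_2$ are always $\mathbb{P}^{n_2}$, Lemma \ref{b2} is already stated for general $s$, and the K\"unneth vanishing $H^1(X,\mathcal{M})=0$ for every line bundle $\mathcal{M}$ needs only that every factor has dimension at least $2$, which the hypotheses guarantee. Your version therefore subsumes Lemma \ref{o1} and dispenses with the induction on $s$ entirely; what the paper's inductive scheme buys is nothing essential here, and arguably your organization is cleaner. All the individual steps check out: the strict decrease $a_1>\cdots>a_r$ with $r\ge 3$ forces $a_1-a_r\ge 2$ and excludes the twists of $T\mathbb{P}^{n_2}$ and $\Omega^1_{\mathbb{P}^{n_2}}$ (whose splitting types have repeated entries for $n_2\ge 3$), all multiplicities in Lemma \ref{b2} equal $1$ so the graded pieces are line bundles, and $H^1(\mathbb{P}^m,\mathcal{O}(c))=0$ for $m\ge 2$ kills every extension class.
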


\begin{proof}
The case $s=2$ is Lemma \ref{o1}. Thus we may assume $s\ge 3$ and that the proposition is true for $\mathbb {P}^{n_1}\times \cdots \times \mathbb {P}^{n_{s-1}}$.
By the inductive assumption $E\vert u_s^{-1}(P) \cong \oplus _{i=1}^{r} \mathcal {O}_{u_s^{-1}(P)}(a_i,\dots ,a_i)$ for all $P\in \mathbb {P}^{n_s}$.
As in the proof of Lemma \ref{b2} taking instead of $\pi _i$ the projection
$u_i: X \to \mathbb {P}^{n_i}$ we get line bundles $L_i$, $1 \le i \le r$ of $\mathbb {P}^{n_s}$,
(i.e. line bundles $u_i^\ast (L) \cong \mathcal {O}(0,\dots ,0,c_i,0,\cdots ,0)$ on $X)$ and subbundles $E_1\subset E_2\subset \cdots E_r=E$ such that
$E_i/E_{i-1} \cong \mathcal {O}_X(a_{i-1},\dots ,a_{i-1},c_i)$ (with the convention $E_0=0$). It is sufficient
to prove that each $E_i$ is isomorphic to a direct sum of $i$ line bundles. Since this is obvious for $i=1$, we may
use
induction on $i$. Fix an integer $i\in \{2,\dots ,r\}$. Our assumption on $X$ implies that the extension of any two line bundles splits.
Hence $E_i\cong E_{i-1} \oplus \mathcal {O}_X(a_{i-1},\dots ,a_{i-1},c_i)$.
\end{proof}

\providecommand{\bysame}{\leavevmode\hbox to3em{\hrulefill}\thinspace}


\begin{thebibliography}{99}


\bibitem{be} E. Ballico and P. Ellia, Fibr\'es uniformes de rang $5$ sur $\mathbb P^3$.  Bull. Soc. Math. France 111  (1983), 59--87.

\bibitem{bn} E. Ballico and P. E. Newstead, Uniform bundles on quadric surfaces and some related varieties.  J. London Math. Soc. (2)  31  (1985),  no. 2, 211--223.
    
    \bibitem{ele} G. Elencwajg,  Les fibres uniformes de rang $3$ sur ${\bf P}_{2}({\bf C})$ sont homeg\'enes. Math. Ann. 231  (1978), 217--227.

\bibitem{ehs} G. Elencwajg, A. Hirschowitz and M. Schneider, Les fibres uniformes de rang au plus $n$ sur ${\bf P}_{n}({\bf C})$ sont ceux qu'on croit. Vector bundles and differential equations (Proc. Conf., Nice, 1979),  pp. 37--63, Progress in Math. 7, Birkh\"{a}user, Boston, Mass., 1980.
    
    \bibitem{el2} P. Ellia, Fibr\'es uniformes de rang $n+1$ sur $\mathbb P^n$.  Mem. Soc. Math. France 7  (1982).

\bibitem{ns} P. E. Newstead and R. L. E. Schwarzenberger, Reducible vector bundles on a quadric surface, Proc. Cambridge Philos. Soc. 60 (1964), 421--424.
    
    \bibitem{dr} J. M. Drezet, Example de fibr\'es uniformes non omogen\'es.  C. R. Acad. Sci. Paris S\'er A 291 (1980),  125--128.

\bibitem{oss} Ch. Okonek, M. Schneider and H. Spindler, Vector bundles on complex projective spaces. Progress in Math. 3, Birkh\"{a}user, Boston, Mass., 1980.
    
    \bibitem{sa} E. Sato, Uniform vector bundles on a projective space.  J. Math. Soc. Japan. 28  (1976), 123--132.
    
    \bibitem{vdv} A. Van de Ven, On uniform vector bundles.  Math. Ann. 195 (1972), 245--248.

\bibitem{s} R. L. E. Schwarzenberger, Reducible vector bundles on a quadric surface, Proc. Cambridge Philos. Soc. 58 (1962), 209--216.

\end{thebibliography}
\end{document}